\newtheorem{theorem}{Theorem}[section]
\newtheorem{lemma}[theorem]{Lemma}
\newtheorem{proposition}[theorem]{Proposition}
\theoremstyle{definition}
\newtheorem{definition}[theorem]{Definition}
\newtheorem{remark}[theorem]{Remark}
\newtheorem{example}[theorem]{Example}
\newtheorem{question}[theorem]{Question}
\newcommand{\Complex}{\mathbb{C}}
\begin{document}
\title{Faithful compact quantum group actions on
connected compact metrizable spaces}
\author{Huichi Huang}
\address{Department of Mathematics, SUNY at Buffalo, Buffalo, NY 14260, U.S.A.}
\email{hh43@buffalo.edu}
\keywords{Compact quantum group, quantum permutation group, connected compact metrizable space}
\subjclass[2010]{Primary 46L65, 16W22}
\date{January 29, 2012}
\begin{abstract}
We construct faithful actions of quantum permutation groups on connected compact metrizable spaces. This disproves a conjecture of Goswami.
\end{abstract}

\maketitle
\section{Introduction}

 Compact quantum groups were introduced by Woronowicz in \cite{Woronowicz1987, Woronowicz1998}. They are noncommutative analogues of compact groups. Among all literatures related to compact quantum groups, one particularly interesting topic is the compact quantum group actions on commutative or non-commutative unital C*-algebras  (from the viewpoint of non-commutative topology, that means actions on commutative or non-commutative compact spaces). The actions of compact quantum groups are the natural generalizations of actions of compact groups. It was Podle\'{s} who first formulated the concept of compact quantum group actions, then established some basic properties~\cite{Podles1995}. Later, Wang introduced the quantum permutation groups~ \cite{Wang1998} and showed that they are the universal compact quantum groups acting on finite spaces. After that, many interesting actions are studied (see~\cite{Podles1995, Banica2005,Banica200502,Goswami2009,BG2009,BGS2011,Goswami2011} and the references therein). But so far, all known (commutative) compact spaces admitting genuine faithful compact quantum group actions are disconnected. In~\cite{Goswami2011}, Goswami showed that there is no genuine faithful quantum isometric action of compact quantum groups on the Riemannian manifold $G/T$ where $G$ is a compact, semisimple, centre-less, connected Lie group with a maximal torus $T$  and conjectured that the quantum permutations on (disconnected) finite sets are the only possible faithful actions of genuine compact quantum groups on classical spaces. In this paper, we construct faithful actions of quantum permutation groups on connected compact metrizable spaces and disprove Goswami's conjecture.

The paper is organized as follows. In the next section we recall some basic definitions and terminologies related to compact quantum groups and their actions. Then in section 3, we construct faithful quantum permutation group actions on connected compact metrizable spaces.

\section{Preliminaries}

In this section, we recall some definitions about compact quantum groups. See \cite{Podles1995,Wang1995,Wang1998,Woronowicz1987,Woronowicz1998} for more details. Throughout this paper, the notation $A\otimes B$ for two unital C*-algebras $A$ and $B$ stands for the minimal tensor product of $A$ and $B$. For a $*$-homomorphism $\beta:B\rightarrow B\otimes A$, use $\beta(B)(1\otimes A)$ to denote the linear span of the set $\{\beta(b)(1_B\otimes a)|b\in B, \,a\in A\}$ and  $\beta(B)(B\otimes1)$ to denote the linear span of the set $\{\beta(b_1)(b_2\otimes 1_A)|b_1, b_2\in B\}$. Denote by $\Complex$ the set of complex numbers. For a compact Hausdorff space $X$ and a unital C*-algebra $A$, denote by $C(X,A)$ the C*-algebra of continuous functions mapping from $X$ to $A$. Especially, when $A=\Complex$, we write $C(X,\Complex)$ as $C(X)$. For $x\in X$, use $ev_x$ to denote the evaluation functional on $C(X)$ at the point $x$.

\begin{definition} [Definition 1.1 in \cite{Woronowicz1998}]
A compact quantum group is a unital C*-algebra $A$ together with a unital $*$-homomorphism $\Delta: A\rightarrow A\otimes A$ such that
\begin{enumerate}
\item $(\Delta\otimes id)\Delta=(id\otimes \Delta)\Delta$;

\item $\Delta(A)(1\otimes A)$ and $\Delta(A)(A\otimes1)$ are dense in $A\otimes A$.
\end{enumerate}
\end{definition}

If $A$ is non-commutative, we say that $(A,\Delta)$ is a {\it genuine} compact quantum group.
We say a unital C*-subalgebra $Q$ of $A$ is a {\it compact quantum quotient group} of $(A,\Delta)$ if  $\Delta(Q)\subseteq Q\otimes Q$, and $\Delta(Q)(1\otimes Q)$ and $\Delta(Q)(Q\otimes1)$ are dense in $Q\otimes Q$. That is, $(Q,\Delta|_{Q})$ is a compact quantum group~\cite[Definition 2.9]{Wang1995}. If $Q\neq A$, we call $Q$ a {\it proper compact quantum quotient group}.

\begin{definition}[Definition 1.4 in \cite{Podles1995}]
An action of a compact quantum group $(A,\Delta)$ on a unital C*-algebra $B$ is a unital $*$-homomorphism $\alpha:B\rightarrow B\otimes A$ satisfying that
\begin{enumerate}
\item $(\alpha\otimes id)\alpha=(id\otimes \Delta)\alpha$;

\item $\alpha(B)(1\otimes A)$ is dense in $B\otimes A$.
\end{enumerate}
\end{definition}

When $A$ is non-commutative, we call $\alpha$ a genuine compact quantum group action.
We say that $\alpha$ is {\it faithful} if there is no proper compact quantum quotient group $Q$ of $A$ such that $\alpha$ induces an action $\alpha_q$ of $Q$ on $B$ satisfying $\alpha(b)=\alpha_q(b)$ for all $b$ in $B$~\cite[Definition 2.4]{Wang1998}. An action $\alpha$ is called {\it ergodic} if $\{b\in B|\alpha(b)=b\otimes 1_A\}=\Complex 1_B$.
If $A$ acts on $C(X)$ for a compact Hausdorff space $X$, we say that $A$ acts on $X$.

For any positive integer $n$, let $A_n$ be the universal C*-algebra generated by $a_{ij}$ for $1\leq i,j\leq n$ under the relations $a_{ij}^*=a_{ij}=a_{ij}^2$ and $\sum_{i=1}^n a_{ij}=\sum_{j=1}^n a_{ij}=1$. Let $\Delta_n: A_n\rightarrow A_n\otimes A_n$ be the $*$-homomorphism satisfying that $\Delta_n(a_{ij})=\sum_{k=1}^n a_{ik}\otimes a_{kj}$. Then $(A_n,\Delta_n)$ is a compact quantum group and is called a {\it quantum permutation group}~\cite[Theorem 3.1]{Wang1998}. Moreover, the quantum permutation group $A_n$ is a genuine quantum group when $n\geq 4$~\cite[The example before Theorem 6.2]{Wang1999}.

Let $X_n=\{x_1,x_2,\ldots,x_n\}$ be the finite space with $n$ points. Define $e_i$ for $1\leq i\leq n$ to be the function on $X_n$ such that $e_i(x_j)=\delta_{ij}$ for $1\leq j\leq n$. There is a compact quantum group action  $\alpha_n: C(X_n)\rightarrow C(X_n)\otimes A_n$  such that
$\alpha_n(e_i)=\sum_{k=1}^n e_k\otimes a_{ki}$ for all $1\leq i \leq n$~\cite[Theorem 3.1]{Wang1998}.

\section{Main Results}
 Let $Y$ be a compact Hausdorff space.
\begin{lemma}
 There exists an action $\alpha$ of the quantum permutation group $A_n$ on $X_n\times Y$  given by $\alpha(e_i\otimes f)=\sum_{k=1}^n e_k\otimes f\otimes a_{ki}$ for all $1\leq i\leq n$ and $f\in C(Y)$.
\end{lemma}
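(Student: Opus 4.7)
Identify $C(X_n\times Y)$ with $C(X_n)\otimes C(Y)$ (valid since $X_n$ is finite). The idea is to let $\alpha$ be the quantum permutation action $\alpha_n$ on the $C(X_n)$ factor combined with the trivial action on the $C(Y)$ factor. Concretely, I would define
$$\alpha \;=\; (\mathrm{id}_{C(X_n)}\otimes \sigma)\circ(\alpha_n\otimes \mathrm{id}_{C(Y)}),$$
where $\sigma:A_n\otimes C(Y)\To C(Y)\otimes A_n$ is the flip $*$-isomorphism. As a composition of unital $*$-homomorphisms, $\alpha$ is automatically a well-defined unital $*$-homomorphism into $C(X_n)\otimes C(Y)\otimes A_n$. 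A one-line check on generators yields
$$\alpha(e_i\otimes f)=(\mathrm{id}\otimes\sigma)\Bigl(\sum_{k=1}^n e_k\otimes a_{ki}\otimes f\Bigr)=\sum_{k=1}^n e_k\otimes f\otimes a_{ki},$$
giving the prescribed formula.

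It then remains to verify the two axioms in the definition of a compact quantum group action. Coassociativity $(\alpha\otimes\mathrm{id}_{A_n})\alpha=(\mathrm{id}\otimes\Delta_n)\alpha$ can be checked directly on the generators $e_i\otimes f$: both sides produce $\sum_{k,l}e_k\otimes f\otimes a_{kl}\otimes a_{li}$, using $\Delta_n(a_{ki})=\sum_l a_{kl}\otimes a_{li}$. Since $\alpha$ is a $*$-homomorphism and $C(X_n)\otimes C(Y)$ is generated (as a C*-algebra) by the elements $e_i\otimes f$, this suffices.

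For the Podle\'s density condition, I would reduce to the analogous density for $\alpha_n$. Let $g\otimes f\otimes h$ be an elementary tensor in $C(X_n)\otimes C(Y)\otimes A_n$. Since $\alpha_n(C(X_n))(1\otimes A_n)$ is dense in $C(X_n)\otimes A_n$, choose finite sums $\sum_i \alpha_n(g_i)(1\otimes h_i)$ converging to $g\otimes h$. Applying the flip $\mathrm{id}\otimes\sigma$ and observing
$$\alpha(g_i\otimes f)(1\otimes 1\otimes h_i)=(\mathrm{id}\otimes\sigma)\bigl(\alpha_n(g_i)(1\otimes h_i)\otimes f\bigr),$$
the sums $\sum_i \alpha(g_i\otimes f)(1\otimes 1\otimes h_i)$ converge to $g\otimes f\otimes h$. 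Since such elementary tensors are total in $C(X_n)\otimes C(Y)\otimes A_n$, density of $\alpha(C(X_n\times Y))(1\otimes A_n)$ follows.

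There is no real obstacle; the only subtle point is keeping track of the tensor-factor ordering when inserting the flip $\sigma$, which is the reason the formula has $f$ sitting between $e_k$ and $a_{ki}$ rather than at the end.
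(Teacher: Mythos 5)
Your proposal is correct, and the map you construct is literally the same one the paper uses; the difference lies only in how the action axioms are verified. The paper's proof is essentially a citation: it notes that $A_n$ acts on $X_n$ by $\alpha_n$ and that the trivial quantum group $\Complex$ acts trivially on $Y$, and then invokes Wang's theorem on tensor products of compact quantum groups to conclude that $A_n\otimes\Complex\cong A_n$ acts on $X_n\times Y$ by $\sigma_{23}(\alpha_n\otimes\sigma)$ --- which, after the identification $A_n\otimes\Complex\cong A_n$, is exactly your $(\mathrm{id}\otimes\sigma)\circ(\alpha_n\otimes\mathrm{id}_{C(Y)})$. You instead check the two axioms by hand, and both checks are sound: the coassociativity computation on the generators $e_i\otimes f$ gives $\sum_{k,l}e_k\otimes f\otimes a_{kl}\otimes a_{li}$ on both sides and extends to all of $C(X_n)\otimes C(Y)$ because the $e_i\otimes f$ span it linearly ($X_n$ being finite); the Podle\'{s} density condition transports correctly from $\alpha_n$ because $\mathrm{id}\otimes\sigma$ is an isometric $*$-isomorphism, the map $c\mapsto c\otimes f$ is bounded, and elementary tensors are total. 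What your route buys is self-containedness --- no appeal to the tensor-product-of-actions theorem --- at the cost of a page of routine verification that the citation packages up; the paper's route is shorter and makes visible that the construction is an instance of a general phenomenon (a product action of a product quantum group in which one factor is trivial).
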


\begin{proof}
The quantum permutation group $A_n$ acts on $X_n$ by $\alpha_n$. Note that $\Complex$ is also a compact quantum group. We denote the trivial action of $\Complex$ on $Y$ by $\sigma$. Then $A_n\otimes \Complex$ is also a compact quantum group and acts on $X_n\times Y$~\cite[Theorem 2.1]{Wang1993}. This action, denoted by $\alpha$, is defined by $\alpha:=\sigma_{23}(\alpha_n\otimes\sigma)$, where $\sigma_{23}:C(X_n)\otimes A_n\otimes C(Y)\otimes \Complex \rightarrow C(X_n)\otimes C(Y)\otimes A_n\otimes \Complex$ denotes the operator flipping the 2nd and 3rd components. Note that $A_n\otimes\Complex\cong A_n$. Therefore, we can say that $A_n$ acts on $X_n\times Y$ by $\alpha$. Since $\alpha_n(e_i)=\sum_{k=1}^n e_k\otimes a_{ki}$ and $\sigma(f)=f\otimes 1$, we have that $\alpha(e_i\otimes f)=\sum_{k=1}^n e_k\otimes f\otimes a_{ki}$ for all $1\leq i\leq n$ and $f\in C(Y)$.
\end{proof}

Let $Y_1$ be a closed subset of $Y$. We define an equivalence relation $\sim$ on $X_n\times Y$ as follows.
For $y',y''$ in $Y$ and $x', x''$ in $X_n$, two points $(x',y')$ and $(x'',y'')$ in $X_n\times Y$ are equivalent if one of the following is true:
\begin{enumerate}
\item $y'=y''\in Y_1$;

\item $y'=y''$ and $x'=x''$.
\end{enumerate}

\begin{lemma}\label{compact Hausdorff lemma}
The quotient space $X_n\times Y/\sim$ is compact and Hausdorff.
\end{lemma}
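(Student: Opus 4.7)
The plan is to invoke the standard fact that if $Z$ is a compact Hausdorff space and $R\subseteq Z\times Z$ is a closed equivalence relation, then the quotient $Z/R$ is compact Hausdorff. In our setting $Z=X_n\times Y$ is compact Hausdorff, since $X_n$ is a finite discrete space and $Y$ is compact Hausdorff. Compactness of $X_n\times Y/\!\sim$ is then automatic, as it is the image of the compact space $X_n\times Y$ under the continuous quotient map. So the real content is to check that the graph $R$ of $\sim$ is closed in $(X_n\times Y)\times(X_n\times Y)$.

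To check this, write $R=D\cup E$, where
$$D=\{((x,y),(x,y)):x\in X_n,\,y\in Y\}$$ is the diagonal of $X_n\times Y$, and
$$E=\{((x',y),(x'',y)):x',x''\in X_n,\,y\in Y_1\}.$$
The diagonal $D$ is closed because $X_n\times Y$ is Hausdorff. For $E$, consider the continuous map $\varphi:(X_n\times Y)^2\to Y\times Y$ sending $((x',y'),(x'',y''))\mapsto(y',y'')$. Then $E=\varphi^{-1}(\Delta_Y\cap(Y_1\times Y_1))$, where $\Delta_Y$ is the diagonal of $Y$. Since $Y$ is Hausdorff, $\Delta_Y$ is closed; since $Y_1$ is closed in $Y$ by hypothesis, $Y_1\times Y_1$ is closed; hence $E$ is the preimage of a closed set under a continuous map, so $E$ is closed. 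Thus $R=D\cup E$ is closed.

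It remains to deduce Hausdorffness of the quotient from closedness of $R$. This is standard: one first shows that the quotient map $\pi$ is closed, via the identity $\pi^{-1}(\pi(C))=p_1(R\cap(Z\times C))$ for any closed $C\subseteq Z$, using that $Z\times Z$ is compact and $p_1$ is closed on compact sets into a Hausdorff space. Then, given two distinct classes $[p]\neq[q]$ in $Z/\!\sim$, the classes $[p]$ and $[q]$ are disjoint closed subsets of the compact Hausdorff (hence normal) space $Z$, so they can be separated by disjoint open sets $U,V$; replacing $U$ by $U'=Z\setminus\pi^{-1}(\pi(Z\setminus U))$ (which is open since $\pi$ is closed) and similarly for $V$, one obtains saturated disjoint open neighborhoods whose images in $Z/\!\sim$ separate $[p]$ and $[q]$. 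The only step requiring any checking specific to this paper is the closedness of $R$, which rests crucially on $Y_1$ being closed; the rest is formal.
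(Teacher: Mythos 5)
Your proof is correct, and it rests on the same pivot as the paper's: reduce Hausdorffness of the quotient to closedness of the graph $R$ of $\sim$ in $Z\times Z$, where $Z=X_n\times Y$. The difference is in how that closedness is verified. The paper argues on the complement, taking a pair $(z_1,z_2)\notin R$ and producing a basic open neighborhood missing $R$ in two cases ($y'\neq y''$, using that $Y$ is Hausdorff; $y'=y''\notin Y_1$, using that $Y_1$ is closed), whereas you write $R=D\cup E$ with $D$ the diagonal and $E=\varphi^{-1}\bigl(\Delta_Y\cap(Y_1\times Y_1)\bigr)$ for the projection $\varphi$ onto $Y\times Y$, so that closedness falls out of continuity of $\varphi$ and closedness of $\Delta_Y$ and $Y_1$. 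Your decomposition is cleaner and avoids the case analysis, and it isolates exactly where the hypothesis on $Y_1$ enters. You also supply a proof of the background fact (closed equivalence relation on a compact Hausdorff space yields a Hausdorff quotient, via closedness of $\pi$ and saturation of the separating open sets), which the paper simply cites from Rotman; that part of your argument is standard and correct, though strictly speaking you could have cited it as the paper does.
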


\begin{proof}
For convenience, denote $X_n\times Y$ by $Z$. The compactness of $Z/\sim$ follows from the compactness of $Z$. To show that $Z/\sim$ is Hausdorff, it suffices to show that the subset $R:=\{(z_1, z_2)\in Z^2|z_1\sim z_2 \}$ of $Z^2$ is closed~\cite[Theorem 8.2]{Rotman1988}. Let $(z_1, z_2)$ be in $Z^2\backslash R$. Thus $z_1\nsim z_2$. Use $(x',y')$ and $(x'',y'')$ to denote $z_1$ and $z_2$ respectively.

Case 1. If $y'\neq y''$, then there exist two open subsets $U$ and $V$ of $Y$ such that $y'\in U$, $y''\in V$ and $U\cap V=\emptyset$. Thus $(X_n\times U)\times (X_n\times V)$ is an open neighborhood of  $(z_1, z_2)$ and is disjoint with $R$.

Case 2. If $y'=y''$, then $x'\neq x''$, and $y'\notin Y_1$. Since $Y_1$ is closed and $Y$ is compact Hausdorff, there exists an open subset $U$ of $Y$ containing $y'$ and $U$ is disjoint with $Y_1$. Consequently $(\{x'\}\times U)\times (\{x''\}\times U)$ is an open neighborhood of  $(z_1, z_2)$ and is disjoint with $R$.

Combining Case 1 and Case 2, we obtain that $Z^2\backslash R$ is open. Hence $R$ is closed and $Z/\sim$ is Hausdorff.
\end{proof}

\begin{lemma}\label{technical lemma}
If an element $F$ of $C(X_n\times Y)$ satisfies that $F(x_i,y')=F(x_j,y')$ for some $y'$ in $Y$ and all $1\leq i,j \leq n$, then $\alpha(F)(x_k,y')=F(x_j,y')1_{A_n}$ for all $1\leq j,k \leq n$.
\end{lemma}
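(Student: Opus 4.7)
The plan is to exploit the tensor-product identification $C(X_n\times Y)\cong C(X_n)\otimes C(Y)$ together with the fact that $\{e_1,\ldots,e_n\}$ forms a linear basis of $C(X_n)$. Concretely, I will write the arbitrary $F$ in the form $F=\sum_{i=1}^n e_i\otimes f_i$, where $f_i\in C(Y)$ is defined by $f_i(y)=F(x_i,y)$. The hypothesis of the lemma then translates to the statement that $f_1(y')=f_2(y')=\cdots=f_n(y')$; call this common complex number $c$, so that $c=F(x_j,y')$ for every $j$.

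With this setup, the next step is to apply $\alpha$ termwise using the formula from the first lemma of this section:
\[
\alpha(F)=\sum_{i=1}^n\alpha(e_i\otimes f_i)=\sum_{i=1}^n\sum_{k=1}^n e_k\otimes f_i\otimes a_{ki}.
\]
To extract the value at $(x_k,y')$, I will apply the slice map $ev_{(x_k,y')}\otimes id_{A_n}\colon C(X_n\times Y)\otimes A_n\To A_n$, using $e_\ell(x_k)=\delta_{k\ell}$ to collapse the outer sum. This yields the identity $\alpha(F)(x_k,y')=\sum_{i=1}^n f_i(y')\,a_{ki}$ inside $A_n$.

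The final step is to substitute $f_i(y')=c$ for each $i$ and invoke the defining quantum permutation relation $\sum_{i=1}^n a_{ki}=1_{A_n}$, i.e.\ the $k$-th row sum of the magic matrix equals $1$. This gives $\alpha(F)(x_k,y')=c\,1_{A_n}=F(x_j,y')\,1_{A_n}$, valid for every $k$ and every $j$, as required.

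I do not anticipate a genuine obstacle: the argument is a direct computation once the matrix-coefficient form of $\alpha$ is in hand. The only subtlety worth being explicit about is that ``evaluating $\alpha(F)$ at $(x_k,y')$'' must be interpreted as applying the $\ast$-homomorphism $ev_{(x_k,y')}\otimes id_{A_n}$, so that all manipulations take place inside $A_n$ and the common value $c$ can be pulled out as a scalar multiple of $1_{A_n}$.
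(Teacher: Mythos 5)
Your proof is correct and follows essentially the same route as the paper: decompose $F=\sum_{i=1}^n e_i\otimes f_i$, apply the formula $\alpha(e_i\otimes f)=\sum_k e_k\otimes f\otimes a_{ki}$, slice at $(x_k,y')$ to get $\sum_i f_i(y')a_{ki}$, and use $\sum_i a_{ki}=1_{A_n}$. Nothing further is needed.
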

\begin{proof}
Note that $F$ can be written as  $\sum_{i=1}^n e_i\otimes f_i$ where $f_1,...,f_n$ are in $C(Y)$. If $F(x_i,y')=F(x_j,y')$, then $f_i(y')=f_j(y')$. We obtain that
\begin{equation*}
\begin{split}
\alpha(F)(x_k,y)
&=(ev_k\otimes ev_y\otimes id)\alpha(\sum_{i=1}^n e_i\otimes f_i)\\
&=\sum_{i=1}^n (ev_k\otimes ev_y\otimes id)\alpha(e_i\otimes f_i)\\
&=\sum_{i=1}^n \sum_{l=1}^n (ev_k\otimes ev_y\otimes id)(e_l\otimes f_i\otimes a_{li})\\
&=\sum_{i=1}^n f_i(y)a_{ki} \notag
\end{split}
\end{equation*}
for any $y\in Y$ and $1\leq k\leq n$.
Since $f_i(y')=f_j(y')$ for all $1\leq i,j\leq n$, and $\sum_{i=1}^n a_{ki}=1_{A_n}$ for all $1\leq k \leq n$, we get

$$\alpha(F)(x_k,y')=\sum_{i=1}^n f_i(y')a_{ki}=f_j(y')\sum_{i=1}^n a_{ki}=f_j(y')1_{A_n}=F(x_j,y')1_{A_n}$$ for all $1\leq j,k\leq n$. This completes the proof.
\end{proof}

Note that $C(X_n\times Y/\sim)$ is a C*-subalgebra of $C(X_n\times Y)$.

\begin{proposition}\label{p1}
When the action $\alpha$ is restricted on $C(X_n\times Y/\sim)$, it induces an action $\widetilde{\alpha}$ of $A_n$ on $X_n\times Y/\sim$.
\end{proposition}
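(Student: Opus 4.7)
The plan is to verify, in order, that (i) $\alpha$ sends $B := C(X_n \times Y/\sim)$ into $B \otimes A_n$, so that the restriction $\widetilde{\alpha} := \alpha|_B$ is well defined, (ii) coassociativity transfers automatically, and (iii) the Podle\'{s} density condition, that $\widetilde{\alpha}(B)(1 \otimes A_n)$ is dense in $B \otimes A_n$, is satisfied.

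For (i), identify $B$ with $\{F = \sum_i e_i \otimes f_i \in C(X_n \times Y) : f_i|_{Y_1} = f_j|_{Y_1} \text{ for all } i,j\}$ and identify $C(X_n \times Y) \otimes A_n$ with $C(X_n \times Y, A_n)$. For any $F \in B$ and $y' \in Y_1$, Lemma \ref{technical lemma} gives $\alpha(F)(x_k, y') = F(x_j, y')\,1_{A_n}$ independently of $k$, so $\alpha(F)$, viewed as an $A_n$-valued continuous function on $X_n \times Y$, is constant in the first coordinate along $X_n \times Y_1$ and descends to a continuous $A_n$-valued function on $X_n \times Y/\sim$, i.e., lies in $B \otimes A_n$. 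Property (ii) is immediate because the identity $(\alpha \otimes id)\alpha = (id \otimes \Delta_n)\alpha$ restricts verbatim from $C(X_n \times Y)$ to $B$.

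For (iii) I would first produce a convenient spanning set for $B$ via Tietze. For $f \in C(Y)$ set $F_f := \sum_i e_i \otimes f$, and for $g \in C(Y)$ with $g|_{Y_1} = 0$ and $1 \leq i \leq n$ set $G_{i,g} := e_i \otimes g$; both lie in $B$. Given $F = \sum_i e_i \otimes f_i \in B$, use the Tietze extension theorem to pick $h \in C(Y)$ with $h|_{Y_1} = f_1|_{Y_1}$, and then $F = F_h + \sum_i G_{i, f_i - h}$, so the $F_f$'s and $G_{i,g}$'s span $B$. A direct computation using the quantum permutation relations then finishes (iii): first, $\alpha(F_f) = \sum_k e_k \otimes f \otimes \sum_i a_{ki} = F_f \otimes 1_{A_n}$, so $F_f \otimes a \in \widetilde{\alpha}(B)(1 \otimes A_n)$ for every $a \in A_n$; second, $\alpha(G_{i,g})(1 \otimes a_{ji}) = \sum_k e_k \otimes g \otimes a_{ki} a_{ji} = e_j \otimes g \otimes a_{ji}$, where I use the column orthogonality $a_{ki} a_{ji} = \delta_{kj} a_{ji}$ (a consequence of $\sum_k a_{ki} = 1$ being a sum of projections in a C*-algebra). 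Right-multiplying by $1 \otimes 1 \otimes b$ and then summing over $i$ yields $e_j \otimes g \otimes b = G_{j,g} \otimes b \in \widetilde{\alpha}(B)(1 \otimes A_n)$. Combining with the diagonal case and the Tietze decomposition, $\widetilde{\alpha}(B)(1 \otimes A_n)$ contains $F \otimes a$ for every $F \in B$ and $a \in A_n$, so density in $B \otimes A_n$ follows.

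I expect (iii) to be the main obstacle: steps (i) and (ii) are essentially immediate from the earlier lemma and from coassociativity of $\alpha$, whereas (iii) combines a choice of spanning set for $B$ (supplied by Tietze) with a careful use of the relations defining the quantum permutation group, in particular the column orthogonality of the matrix $(a_{ij})$.
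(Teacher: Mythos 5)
Your proof is correct, but your argument for the Podle\'{s} density condition is genuinely different from the paper's. The paper treats $G=F\otimes a-\alpha(F)(1\otimes a)$, notes that it vanishes on $X_n\times Y_1$ by Lemma~\ref{technical lemma}, and then runs an $\varepsilon$-approximation: a Urysohn function $H_\varepsilon$ supported away from $Y_1$ is used to cut down elements $\sum_i\alpha(F_i)(1\otimes a_i)$ (supplied by the density of $\alpha(C(X_n\times Y))(1\otimes A_n)$ in $C(X_n\times Y)\otimes A_n$) to elements $\sum_i\alpha(H_\varepsilon F_i)(1\otimes a_i)$ coming from $C(X_n\times Y/\sim)$, showing $F\otimes a$ lies in the closure. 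You instead give an exact algebraic argument: decompose $B=C(X_n\times Y/\sim)$ into the span of $1\otimes f$ and of $e_i\otimes g$ with $g|_{Y_1}=0$, use $\alpha(1\otimes f)=1\otimes f\otimes 1$, and recover $e_j\otimes g\otimes b$ from $\sum_i\alpha(e_i\otimes g)(1\otimes a_{ji}b)$ via the orthogonality $a_{ki}a_{ji}=\delta_{kj}a_{ji}$ of the column projections and $\sum_i a_{ji}=1$. Your computations check out (in particular $e_i\otimes g\in B$ when $g|_{Y_1}=0$, and a sum of projections equal to $1$ is indeed mutually orthogonal), and you actually obtain the stronger conclusion that $\alpha(B)(1\otimes A_n)$ contains the whole algebraic tensor product $F\otimes a$, not merely a dense subset; the paper's softer approximation argument has the advantage of not requiring an explicit spanning set and would generalize to situations where one is not available. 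One cosmetic point: the appeal to Tietze is unnecessary, since $h=f_1$ already satisfies $h|_{Y_1}=f_1|_{Y_1}$; and step (ii) should also record that $\widetilde{\alpha}$ remains a unital $*$-homomorphism, which is immediate.
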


\begin{proof}
 We first prove the following:
\begin{equation}\label{eq:inclusion}
\alpha(C(X_n\times Y/\sim))\subseteq C(X_n\times Y/\sim)\otimes A_n.
\end{equation}

 Since $C(X_n\times Y/\sim)\otimes A_n\cong C(X_n\times Y/\sim,A_n)$ and $C(X_n\times Y)\otimes A_n\cong C(X_n\times Y,A_n)$, an element $c$ of $C(X_n\times Y)\otimes A_n$ belongs to $C(X_n\times Y/\sim)\otimes A_n$ if and only if
  $(ev_k\otimes ev_y\otimes id)(c)=(ev_l\otimes ev_y\otimes id)(c)$ for all $1\leq k,l\leq n$ and $y\in Y_1$.

 Therefore, to prove \eqref{eq:inclusion}, it suffices to show that
 \begin{equation*}
 (ev_k\otimes ev_y\otimes id)\alpha(F)=(ev_l\otimes ev_y\otimes id)\alpha(F)
 \end{equation*}
 for all $1\leq k,l\leq n$, $y\in Y_1$  and $F$ in $C(X_n\times Y/\sim)$.

Let $F$ be in $C(X_n\times Y/\sim)$. Then $F$ can be written as $\sum_{i=1}^n e_i\otimes f_i$ for $f_i\in C(Y)$ satisfying that $f_i(y)=f_j(y)$ for all $1\leq i,j\leq n$ and $y\in Y_1$. By Lemma~\ref{technical lemma}, we have
$$(ev_k\otimes ev_y\otimes id)\alpha(\sum_{i=1}^n e_i\otimes f_i)=f_j(y)1_{A_n}=(ev_l\otimes ev_y\otimes id)\alpha(\sum_{i=1}^n e_i\otimes f_i)$$ for all $y\in Y_1$ and $1\leq j,k,l\leq n$. This proves ~\eqref{eq:inclusion}.

Next we show that
$\alpha(C(X_n\times Y/\sim))(1\otimes A_n)$ is dense in $C(X_n\times Y/\sim)\otimes A_n$.

It is enough to show that $F\otimes a$ is in the closure of $\alpha(C(X_n\times Y/\sim))(1\otimes A_n)$  for all $F$ in $C(X_n\times Y/\sim)$ and $a$ in $A_n$. Denote $F\otimes a-\alpha(F)(1\otimes a)$ by $G$. Note that $F$ can be written as $\sum_{i=1}^n e_i\otimes f_i$ for $f_i\in C(Y)$ satisfying that $f_i(y)=f_j(y)$ for all $1\leq i,j\leq n$ and $y\in Y_1$. By Lemma~\ref{technical lemma}, we have $\alpha(F)(x_i,y)=F(x_j,y)1_{A_n}$ for all $x_i,x_j$ in $X_n$ and $y$ in $Y_1$. Thus $G|_{X_n\times Y_1}=0$. For arbitrary $\varepsilon>0$, let $U$ be an open subset of $Y$ containing $Y_1$ and satisfying that $\|G(x_i,y)\|<\varepsilon$ for all $(x_i,y)\in X_n\times U$. By Urysohn's Lemma, there exists an $f$ in $C(Y)$, such that $f|_{Y_1}=0$, $f|_{Y\backslash U}=1$ and $0\leq f \leq 1$. Denote $1\otimes f\in C(X_n\times Y)$ by $H_{\varepsilon}$. Then $H_{\varepsilon}|_{X_n\times Y_1}=0$ and $H_{\varepsilon}(x_i,y)=1$ for all $x_i$ in $X_n$ and $y$ in $Y\backslash U$. It follows from Lemma~\ref{technical lemma} that $(\alpha(H_{\varepsilon})G-G)(x_i,y)=0$ for all $x_i\in X_n$ and $y\in Y\backslash U$. Since $0\leq H_{\varepsilon}\leq 1$, for $(x_i,y)\in X_n\times U$, we have
$$\|(\alpha(H_{\varepsilon})G-G)(x_i,y)\|\leq \|\alpha(H_{\varepsilon})-1\|\|G(x_i,y)\|<\varepsilon.$$
Hence $\|\alpha(H_{\varepsilon})G-G\|<\varepsilon$.
Moreover, since $\alpha$ is an action of $A_n$ on $X_n\times Y$, we have that $\alpha(C(X_n\times Y))(1\otimes A_n)$ is dense in $C(X_n\times Y)\otimes A_n$. So there exist $F_i\in C(X_n\times Y)$ and $a_i\in A_n$ for $1\leq i\leq m$ where $m$ is a positive integer such that $\|G-\sum_{i=1}^m\alpha(F_i)(1\otimes a_i)\|<\varepsilon$. It follows from $0\leq H_{\varepsilon}\leq 1$ that $\|\alpha(H_{\varepsilon})G-\alpha(H_{\varepsilon})\sum_{i=1}^m\alpha(F_i)(1\otimes a_i)\|<\varepsilon$. Hence
\begin{align}
\begin{split}
&\|F\otimes a-\alpha(F)(1\otimes a)-\sum_{i=1}^m\alpha(H_{\varepsilon}F_i)(1\otimes a_i)\| \\
&=\|G-\sum_{i=1}^m\alpha(H_{\varepsilon}F_i)(1\otimes a_i)\| \\
&\leq\|G-\alpha(H_{\varepsilon})G\|+\|\alpha(H_{\varepsilon})G-\sum_{i=1}^m\alpha(H_{\varepsilon}F_i)(1\otimes a_i)\|<2\varepsilon. \notag
\end{split}
\end{align}
Note that $H_{\varepsilon}F_i|_{X_n\times Y_1}=0$ for all $1\leq i \leq m$. Thus $H_{\varepsilon}F_i$ is in $C(X_n\times Y/\sim)$ for all $1\leq i \leq m$.
It follows that $\alpha(F)(1\otimes a)+\sum_{i=1}^m\alpha(H_{\varepsilon}F_i)(1\otimes a_i)$ is in $\alpha(C(X_n\times Y/\sim))(1\otimes A_n)$. Since $\varepsilon>0$ is arbitrary, we conclude that $F\otimes a$ is in the closure of $\alpha(C(X_n\times Y/\sim))(1\otimes A_n)$. This completes the proof.
\end{proof}

\begin{theorem}\label{main theorem}
If $Y_1\neq Y$, the action $\widetilde{\alpha}$ of $A_n$ on $X_n\times Y/\sim$ is faithful.
\end{theorem}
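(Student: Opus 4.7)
The plan is to show that any compact quantum quotient group $Q\subseteq A_n$ through which $\widetilde{\alpha}$ factors must contain all the generators $a_{ij}$, and hence must equal $A_n$. The key is to exhibit, for each pair $(k,i)$, a function in $C(X_n\times Y/\sim)$ whose image under $\widetilde{\alpha}$ has $a_{ki}$ as a slice at some evaluation point.

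First I would exploit the hypothesis $Y_1\neq Y$ to produce a test function. Pick $y_0\in Y\setminus Y_1$. Since $Y$ is compact Hausdorff (hence normal) and $\{y_0\}, Y_1$ are disjoint closed sets, Urysohn's Lemma yields $f\in C(Y)$ with $f(y_0)=1$, $f|_{Y_1}=0$, and $0\leq f\leq 1$. For each $1\leq i\leq n$, set $F_i:=e_i\otimes f\in C(X_n\times Y)$. Because $F_i(x_j,y)=\delta_{ij}f(y)=0$ for every $y\in Y_1$ and every $j$, in particular $F_i(x_j,y)=F_i(x_k,y)$ whenever $y\in Y_1$, so $F_i$ descends through the equivalence relation and therefore $F_i\in C(X_n\times Y/\sim)$.

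Next I would apply the formula from the first lemma to obtain
\[
\widetilde{\alpha}(F_i)=\alpha(e_i\otimes f)=\sum_{k=1}^{n}e_k\otimes f\otimes a_{ki}\in C(X_n\times Y)\otimes A_n.
\]
Now suppose $Q$ is a compact quantum quotient group of $A_n$ such that $\widetilde{\alpha}$ factors through $Q$, i.e.\ $\widetilde{\alpha}(C(X_n\times Y/\sim))\subseteq C(X_n\times Y/\sim)\otimes Q$. Apply the slice map $ev_{(x_k,y_0)}\otimes id$: since $(x_k,y_0)$ is a well-defined point and this slice map sends $C(X_n\times Y/\sim)\otimes Q$ into $Q$, we compute
\[
(ev_{(x_k,y_0)}\otimes id)(\widetilde{\alpha}(F_i))=\sum_{l=1}^{n}e_l(x_k)\,f(y_0)\,a_{li}=a_{ki}\in Q
\]
for every $1\leq k,i\leq n$. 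Since the $a_{ki}$ generate $A_n$ as a C*-algebra, this forces $Q=A_n$, i.e.\ $\widetilde{\alpha}$ is faithful.

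The only mildly subtle point is the standard fact that for unital C*-subalgebra inclusions $B'\subseteq B$ and $A'\subseteq A$, the minimal tensor product satisfies $B'\otimes A'\subseteq B\otimes A$ and the slice map $ev_x\otimes id$ carries $B'\otimes A'$ into $A'$; I expect to invoke this as a standard property of the minimal tensor product without belaboring it. Everything else is a direct computation using the explicit formula for $\widetilde{\alpha}$ and the choice of the separating function $f$ granted by $Y_1\neq Y$.
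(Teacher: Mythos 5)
Your proposal is correct and follows essentially the same route as the paper's proof: choose $y_0\in Y\setminus Y_1$, use Urysohn's Lemma to produce $f$ with $f(y_0)=1$ and $f|_{Y_1}=0$, observe that $e_i\otimes f$ lies in $C(X_n\times Y/\sim)$, and slice $\widetilde{\alpha}(e_i\otimes f)$ at $(x_k,y_0)$ to extract $a_{ki}\in Q$, forcing $Q=A_n$. The only cosmetic difference is that you spell out the slice-map compatibility with the minimal tensor product, which the paper leaves implicit.
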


\begin{proof}
Suppose $Y_1\neq Y$. Take a point $y_0$ in $Y$ but not in $Y_1$. Since $Y$ is compact Hausdorff, there exists
$f \in C(Y)$ such that $f(y_0)=1$ and $f|_{Y_1}=0$. Note that $e_i\otimes f$ is in $C(X_n\times Y/\sim)$ for any $1\leq i\leq n$. Suppose $Q$ is a compact quantum quotient group of $A_n$ such that $\widetilde{\alpha}$ is an action of $Q$  on $X_n\times Y/\sim$. Then for any $1\leq k\leq n$,
\begin{align*}
(ev_k\otimes ev_{y_0}\otimes id)\alpha(e_i\otimes f)
&=(ev_k\otimes ev_{y_0}\otimes id)(\sum_{l=1}^n e_l\otimes f\otimes a_{li})\\ \notag
&=f(y_0)a_{ki}=a_{ki}
\end{align*}
is in $Q$. Since $i$ is arbitrarily chosen,  we get $a_{ki}\in Q$ for any $1\leq k,i\leq n$. Thus $Q=A_n$. Therefore $\widetilde{\alpha}$ is faithful.
\end{proof}

\begin{proposition}\label{not ergodic}
If $Y$ contains at least two points, the action $\widetilde{\alpha}$ is not ergpdic.
\end{proposition}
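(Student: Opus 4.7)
The plan is to exhibit an explicit non-scalar element of $C(X_n \times Y/\sim)$ that is fixed by $\widetilde{\alpha}$, which directly contradicts ergodicity. The natural candidate is an element that is constant in the $X_n$-direction, since such elements automatically respect the equivalence relation and the $A_n$-action on the $X_n$-coordinate should average out.

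Concretely, I would take a non-constant $f \in C(Y)$ (which exists because $Y$ is compact Hausdorff with at least two points, so by Urysohn's Lemma one can separate two distinct points with a continuous function) and consider $F := \sum_{i=1}^n e_i \otimes f = 1_{C(X_n)} \otimes f$. Since $F(x_i,y) = f(y)$ is independent of $i$, it is constant on the fibres $X_n \times \{y\}$, so in particular constant on every equivalence class; hence $F \in C(X_n \times Y/\sim)$. Moreover, $F$ is not a scalar multiple of $1_{C(X_n \times Y/\sim)} = 1 \otimes 1_{C(Y)}$ precisely because $f$ is non-constant.

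Next I would compute $\widetilde{\alpha}(F)$ directly from the formula $\alpha(e_i \otimes f) = \sum_{k=1}^n e_k \otimes f \otimes a_{ki}$:
\begin{equation*}
\widetilde{\alpha}(F) = \sum_{i=1}^n \sum_{k=1}^n e_k \otimes f \otimes a_{ki} = \sum_{k=1}^n e_k \otimes f \otimes \Bigl(\sum_{i=1}^n a_{ki}\Bigr) = \sum_{k=1}^n e_k \otimes f \otimes 1_{A_n} = F \otimes 1_{A_n},
\end{equation*}
where the crucial step uses the defining relation $\sum_{i=1}^n a_{ki} = 1_{A_n}$ of the quantum permutation group. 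Thus $F$ is fixed by $\widetilde{\alpha}$ but is not a scalar, which shows that $\widetilde{\alpha}$ is not ergodic.

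There is no real obstacle here; the only thing one must be slightly careful about is to confirm membership in $C(X_n \times Y/\sim)$, but since $F$ is constant along $X_n$ the equivalence condition (equality at points of $X_n \times Y_1$) is trivially satisfied irrespective of whether $Y_1$ is empty or not.
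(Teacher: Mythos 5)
Your proposal is correct and is essentially the paper's own proof: both take a non-constant $f\in C(Y)$ and observe that $1\otimes f$ lies in $C(X_n\times Y/\sim)$ and satisfies $\widetilde{\alpha}(1\otimes f)=1\otimes f\otimes 1_{A_n}$. You merely spell out the computation via $\sum_{i=1}^n a_{ki}=1_{A_n}$, which the paper leaves implicit.
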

\begin{proof}
Since $Y$ consists of at least two points, there exist a non constant function $f\in C(Y)$. Then $1\otimes f$ is in $C(X_n\times Y\backslash\sim)$ and not constant. Also $$\widetilde{\alpha}(1\otimes f)=1\otimes f\otimes 1.$$ This shows that $\widetilde{\alpha}$ is not ergodic.
\end{proof}

\begin{proposition}\label{connectedness}
If $Y$ is connected and $Y_1$ is nonempty, then $X_n\times Y/\sim$ is connected.
\end{proposition}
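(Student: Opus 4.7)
The plan is to express $X_n \times Y/\!\sim$ as a union of $n$ connected subsets which all share a common point, and then invoke the standard fact that a union of connected subsets with nonempty total intersection is connected.

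More concretely, let $q: X_n \times Y \to X_n \times Y/\!\sim$ denote the quotient map, and for each $1 \le i \le n$ set $Z_i := q(\{x_i\} \times Y)$. First I would observe that each $Z_i$ is connected: the slice $\{x_i\} \times Y$ is homeomorphic to $Y$ and thus connected, and $Z_i$ is its image under the continuous map $q$.

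Next, using the hypothesis that $Y_1$ is nonempty, I would pick a point $y_0 \in Y_1$ and look at the equivalence class $p_0 := q(x_i, y_0)$. By the first clause of the definition of $\sim$, the points $(x_1, y_0), \ldots, (x_n, y_0)$ all lie in a single $\sim$-equivalence class, so $p_0$ is independent of $i$ and therefore $p_0 \in \bigcap_{i=1}^n Z_i$. Hence $\bigcup_{i=1}^n Z_i$ is a union of connected sets with a common point, which is connected by the standard lemma.

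Finally, since $X_n \times Y = \bigcup_{i=1}^n \{x_i\} \times Y$ and $q$ is surjective, we have $X_n \times Y/\!\sim \,= q(X_n \times Y) = \bigcup_{i=1}^n Z_i$, and connectedness follows. There is no real obstacle here; the only thing to double-check is that the common point argument uses precisely the fact that $Y_1 \neq \emptyset$, which is exactly the given hypothesis.
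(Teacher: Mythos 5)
Your proof is correct, and it takes a different (arguably cleaner) route than the paper's. The paper argues by contradiction with clopen sets: it takes a nonempty clopen subset $U$ of $X_n\times Y/\sim$, pulls it back under the quotient map $\pi$, uses the finiteness of $X_n$ to write $\pi^{-1}(U)=\bigcup_{x_i\in X'}\{x_i\}\times A_i$ with each $A_i$ clopen and nonempty in $Y$, invokes connectedness of $Y$ to force $A_i=Y$, and finally uses a point of $Y_1$ to show that $X'$ must be all of $X_n$, so $U$ is everything. You instead decompose the quotient as the union of the images $Z_i=q(\{x_i\}\times Y)$, each connected as a continuous image of $Y$, and observe that the identification over $Y_1$ gives them a common point, so the standard lemma on unions of connected sets with nonempty intersection applies. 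Both proofs use exactly the same two hypotheses in exactly the same roles ($Y$ connected makes each slice contribute a connected piece; $Y_1\neq\emptyset$ glues the pieces together), but yours packages the gluing step into a standard lemma and avoids the explicit analysis of clopen preimages, while the paper's version is self-contained at the cost of a slightly longer case-check. Either is a complete and acceptable proof.
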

\begin{proof}
As before, denote $X_n\times Y$ by $Z$. Take any nonempty closed and open subset $U$ of $Z/\sim$. Denote by $\pi$ the quotient map from $Z$ onto $Z/\sim$. It follows that $\pi^{-1}(U)$ is a nonempty, closed and open subset of $Z$. Since $X_n$ is finite, we obtain that $\pi^{-1}(U)=\bigcup_{x_i\in X'}\{x_i\}\times A_i$ where $X'$ is a nonempty subset of $X_n$, and every $A_i$ is a nonempty closed and open subset of $Y$. Since $Y$ is connected, we have $A_i=Y$ for all $x_i\in X'$. Take $y\in Y_1$ and $x_i\in X'$. Let $x_j\in X_n$. Then $\pi(x_j,y)=\pi(x_i,y)\in U$. Thus $x_j$ is in $X'$. Therefore $X'=X_n$ and $U=Z/\sim$. So $Z/\sim$ is connected.
\end{proof}

By Theorem~\ref{main theorem} and Proposition~\ref{connectedness}, if we take a nonempty proper closed subset $Y_1$ of a connected compact Hausdorff space $Y$, then we get a faithful action of $A_n$ on a compact connected space $X_n\times Y/\sim$. To be more specific, we list some examples of $X_n\times Y/\sim$.

\begin{example}\label{example}\
\begin{enumerate}
\item If $Y=[0,1]$ and $Y_1=\{0\}$, then $X_n\times Y/\sim$ is a wedge sum of n unit intervals by identifying $(x_i,0)$ to a single point for all $1\leq i \leq n$. In this case $X_n\times Y/\sim$ is a contractible compact metrizable space.

\item If $Y=S^1$ is a circle, and $Y_1=\{y_0\}$ for some point $y_0$ in $S^1$, then $X_n\times S^1/\sim$ will be the n circles touching at a point, which is a  connected compact metrizable space whose fundamental group is the free group with n generators.
\end{enumerate}
\end{example}

\begin{remark}\label{r1}
By Theorem~\ref{main theorem}, the quantum permutation group $A_n$ can act on the spaces in Example~\ref{example} faithfully. When $n\geq4$, this gives us faithful genuine compact quantum group actions on connected compact metrizable spaces. This disproves the conjecture of Goswami~\cite{Goswami2011} mentioned in the introduction. However, Proposition~\ref{not ergodic} tells us that these faithful genuine compact quantum actions on compact connected spaces are not ergodic. For this reason, we ask the following question:
\end{remark}

\begin{question}
Are there any faithful ergodic genuine quantum group actions on compact connected spaces?
\end{question}

\section*{Acknowledgement}
The author is grateful to Professor Hanfeng Li for his long-term support and encouragement. During the writing of this paper, the author benefits a lot from many helpful discussions with him.

\end{document}